\documentclass[9pt,a4paper, leqno]{amsart}

\usepackage{amssymb,amsmath,amsfonts,amsthm,a4}
\usepackage{graphicx, amsbsy, mathrsfs, esint, bbm, dsfont, bm}
\usepackage{color}
\usepackage{mathtools}
\usepackage{eucal}
\usepackage{slashed}

\usepackage{cite}
\usepackage{enumerate}

\usepackage[titletoc,title]{appendix}

\usepackage{mathrsfs} 


\usepackage{graphicx}   
\usepackage{subfigure}  
\usepackage{multirow}
\usepackage{slashed} 

\usepackage{xcolor}
\usepackage{hyperref}
\hypersetup{	
colorlinks=false, 
linkbordercolor={1 0 0},
citebordercolor={0 1 0},
}
\newtheorem{theorem}{Theorem}

\newtheorem{corollary}{Corollary}

\newtheorem*{assumption*}{Assumption}
\theoremstyle{remark}

\newtheorem*{remarks*}{Remarks}
\newtheorem*{remark*}{Remark}

\newcommand{\R}{\mathbb{R}} 
\newcommand{\C}{\mathbb{C}} 
\newcommand{\N}{\mathbb{N}} 
\newcommand{\T}{\mathbb{T}}
\newcommand{\HH}{\mathbb{H}}
\newcommand{\eps}{\varepsilon} 








\newcommand{\comment}[1]{}
\numberwithin{equation}{section}





\newcommand{\be}{\begin{equation}}
\newcommand{\ee}{\end{equation}}
\newcommand{\bes}{\begin{equation*}}
\newcommand{\ees}{\end{equation*}}

\newcommand{\Svec}{\vec{S}}
\newcommand{\Ss}{\mathbb{S}}
\newcommand{\Sb}{{\bm{S}}}
\newcommand{\pt}{\partial}

\newcommand{\ii}{\mathrm{i}}

\newcommand{\Tr}{\mathrm{Tr}}

\catcode`@=11
\def\section{\@startsection{section}{1}%
  \z@{1.5\linespacing\@plus\linespacing}{.5\linespacing}%
  {\normalfont\bfseries\large\centering}}
\catcode`@=12


\begin{document}

\title[Lax Pair for the Half-Wave Maps Equation]{A Lax Pair Structure \\ for the Half-Wave Maps Equation}

\author[P. G\'erard]{Patrick G\'erard}
\address{Laboratoire de Math\'ematiques d'Orsay, Univ. Paris-Sud, CNRS, Universit\'e Paris-Saclay, F-91405 Orsay, France.}
\email{patrick.gerard@math.u-psud.fr}

\author[E. Lenzmann]{Enno Lenzmann}
\address{University of Basel, Department of Mathematics and Computer Science, Spiegelgasse 1, CH-4051 Basel, Switzerland.}
\email{enno.lenzmann@unibas.ch}

\maketitle

\begin{abstract}
We consider the half-wave maps equation
$$
\pt_t \vec{S} = \vec{S} \wedge |\nabla| \vec{S},
$$
where $\vec{S}= \vec{S}(t,x)$ takes values on the two-dimensional unit sphere $\Ss^2$ and $x \in \R$ (real line case) or $x \in \T$ (periodic case). This an energy-critical Hamiltonian evolution equation recently introduced in \cite{LS,Zh}, which formally arises as an effective evolution equation in the classical and continuum limit of Haldane--Shastry quantum spin chains. We prove that the half-wave maps equation admits a Lax pair and we discuss some analytic consequences of this finding.  As a variant of our arguments, we also obtain a Lax pair for the half-wave maps equation with target $\HH^2$ (hyperbolic plane).
\end{abstract}

{\bf Keywords:} Integrable Systems, Half-Wave Maps, Haldane--Shastry model, Calogero--Moser--Sutherland model.

\section{Introduction and Main Results}

Spin chains -- both in quantum and classical versions -- arise as fundamental models in the study of exactly solvable and completely integrable systems. For instance, the classical Heisenberg model (HM) for ferromagnets in one space dimension provides a prototype of a completely integrable classical spin system; see \cite{La, Ta, FT}.

In this note, we are concerned with a new evolution equation for classical spins recently introduced in \cite{LS, Zh}, which we will refer to as the {\bf half-wave maps equation} following \cite{LS}. This equation has some similarities to (HM) and yet it shows a completely different mathematical features in many aspects (e.\,g., traveling solitary waves given by rational functions and energy-criticality of the evolution problem). In fact, the half-wave maps equation can be -- formally, at least -- obtained from taking a combined classical and continuum limit from a quantum spin chain of Haldane--Shastry (HS) type introduced in \cite{Ha,Sh}. Our main result shown below will yield a Lax pair for the half-wave maps equation, which will involve certain suitable {\em nonlocal} operators.

Let us now introduce the mathematical framework to formulate the problem at hand. We consider a time-dependent field of classical spins $\vec{S}= \vec{S}(t,x) \in \R^3$ defined for either $x \in \R$ (real line case) or $x \in \T=\R/2 \pi \mathbb{Z}$ (periodic case). Without loss of generality, we assume that the classical spins are normalized to unit length, i.\,e., we have that $\vec{S}(t,x) \in \Ss^2$ holds. For the spin field $\vec{S}=\vec{S}(t,x)$, we consider the half-wave maps equation given by
\be \tag{HWM} \label{eq:hwm}
\pt_t \Svec = \Svec \wedge |\nabla| \Svec  .
\ee  
Here $\wedge$ denotes the usual vector product in $\R^3$. The (pseudo-differential) operator $|\nabla|$ is defined via its corresponding multiplication symbol in Fourier space, i.\,e.,
$$
\widehat{(|\nabla| f)} = \begin{dcases*} |\xi| \widehat{f}(\xi) & ($x \in \R$, real line case) \\ |n| \widehat{f}_n & ($x \in \T$, periodic case). \end{dcases*}
$$
Here $\widehat{\cdot}$ denotes the Fourier transform for functions either defined on $\R$ or $\T$, respectively. The formal derivation of \eqref{eq:hwm} from a quantum spin chain model of Haldane--Shastry type will be sketched in Section \ref{sec:hwm-summary} below, followed by a brief summary about the traveling solitary waves for \eqref{eq:hwm} recently studied in \cite{LS}.

In analogy to the classical Heisenberg model, the half-wave maps equation comes with a Hamiltonian structure where energy functional in our case reads
\be
E[\vec{S}] =  \frac 1 2 \int \vec{S} \cdot |\nabla| \vec{S} \, dx  .
\ee
The corresponding Poisson bracket for the $\Ss^2$-valued function $\vec{S}=(S_1, S_2, S_3)$ is given by 
\be
\left \{ S_i(x), S_j(y) \right \} = \eps_{ijk} S_k(x) \delta(x-y),
\ee
where $\eps_{ijk}$ denotes the standard anti-symmetric Levi--Civit\`a symbol. As a consequence, the \eqref{eq:hwm} can be (formally) written as $\pt_t \Sb = \{ \Sb, E \}$ in analogy to the Heisenberg model. Furthermore, it is straightforward to check that \eqref{eq:hwm} exhibits formal conservation of total spin and linear momentum, due to the rotational invariance (on the target $\Ss^2$) and translational invariance on the domain; see \cite{LS} for details.

From the point of view of PDE analysis, the evolution problem \eqref{eq:hwm} is {\em energy-critical} since the scaling transform 
$$
\vec{S}(t,x) \mapsto \vec{S}_\lambda(t,x) = \vec{S}(\lambda t, \lambda x)
$$ 
maps solutions into solutions, whereas the energy $E[\vec{S}_\lambda] = E[\vec{S}]$ stays invariant under this transformation. Such a critical scaling behavior is in striking contrast to the one-dimensional Heisenberg model (HM), which is energy-subcritical. As a consequence, the existence of unique global-in-time solutions for the half-wave maps equation is much more delicate. In particular, a possible singularity formation (blowup) of smooth solutions cannot be simply ruled out by using energy conservation.

Let us now turn to the issue of complete integrability for the half-wave maps equation, where we will show below that \eqref{eq:hwm} admits a Lax pair. As in the study of the classical Heisenberg model, it turns out to be expedient to first formulate the problem by using the standard Pauli matrices $\sigma_1, \sigma_2, \sigma_3 \in \mathfrak{su}(2)$. For a given vector $\vec{X} \in \R^3$, we define the $2\times2$--matrix given by
$$
\bm{X} = \vec{X} \cdot \bm{\sigma} = \sum_{j=1}^3 X_j \sigma_j = \left ( \begin{array}{cc} X_3 & X_1 - \ii X_2 \\ X_1 + \ii X_2 & -X_3 \end{array} \right ).
$$ 
with the usual notation $\bm{\sigma}=(\sigma_1, \sigma_2, \sigma_3)$. Clearly, we have that $\bm{X}^* = \bm{X}$ is a Hermitian $2 \times 2$--matrix with vanishing trace $\Tr \, \bm{X}=0$. For the reader's convenience, we recall some standard facts when dealing with Pauli matrices. From the fundamental relation $\sigma_{j} \sigma_k = \delta_{jk} \mathds{1} + \ii \eps_{jkl}\sigma_l$ (where $\mathds{1}$ denotes $2\times2$--unit matrix) we obtain
\be \label{eq:pauli_product}
( \vec{X} \cdot \bm{\sigma}) (\vec{Y} \cdot \bm{\sigma}) = (\vec{X} \cdot \vec{Y}) \mathds{1} + \ii (\vec{X} \wedge \vec{Y}) \cdot \bm{\sigma} .
\ee
for arbitrary vectors $\vec{X}, \vec{Y} \in \R^3$. As a direct consequence of this, we readily deduce that
\be \label{eq:S_unitlength}
\bm{S}^2 = \mathds{1} \quad \mbox{if and only if} \quad |\vec{S}|^2=1.
\ee
Furthermore, we see that \eqref{eq:hwm} can be equivalently written as 
\be \label{eq:hwm_S}
\pt_t \Sb = -\frac{\ii}{2} [\Sb, |\nabla| \Sb],
\ee
where $[A,B] = AB-BA$ denotes the commutator of $A$ and $B$. Clearly, this matrix-valued formulation as a commutator equation bears a strong resemblance to the equation $\pt_t \Sb= - \frac{\ii}{2} [\Sb, \frac{d^2}{dx^2} \Sb]$ used to prove the complete integrability of the one-dimensional Heisenberg model (HM) by Takthajan in \cite{Ta}. However, the presence of the nonlocal pseudo-differential operator $|\nabla|$ in equation \eqref{eq:hwm_S} makes the search for a Lax pair a rather different task.

To formulate the operators for the Lax pair of the half-wave maps equation, we introduce the following notation in order to avoid any potential ambiguities in the expressions below. Suppose that $\vec{A}=\vec{A}(t,x) \in \R^3$ is a given function and let $\bm{A} = \vec{A} \cdot \bm{\sigma}$ denote the corresponding function with values in the Lie algebra $\mathfrak{su}(2)$. We use $\mu_{\bm{A}}$ to denote the multiplication operator for $\bm{A}$ acting on functions $\varphi = \varphi(x)$ with values in $\C^2$, i.\,e., we set
$$
(\mu_{\bm{A}} \varphi)(t,x) = \bm{A}(t,x) \varphi(x).
$$
Given a spin field $\vec{S}=\vec{S}(t,x)$, we are now ready to define the following pair of operators $L_{\Sb}$ and $B_{\Sb}$ as follows
\be \label{def:Lax}
 L_\Sb = [H, \mu_{\Sb} ]  \quad \mbox{and} \quad B_\Sb = - \frac{\ii}{2} \left ( \mu_{\Sb} |\nabla| + |\nabla| \mu_{\Sb} \right ) + \frac{\ii}{2} \mu_{|\nabla| \Sb} .
\ee
The operators $L_\Sb$ and $B_\Sb$ are formally defined on the complex Hilbert space $L^2(X; \C^2)$, where either $X=\R$ (real line case) or $X = \T$ (periodic case). Furthermore, the operator $H$ denotes the {\em Hilbert transform} defined through the principal value expression given by 
$$
(Hf)(x) = \begin{dcases*} \mathrm{p.v.} \frac{1}{\pi} \int_\R \frac{f(y)}{x-y} dy & ($x \in \R$, real line case) \\
\mathrm{p.v.} \int_{\T} f(y) \cot \left ( \frac{x-y}{2} \right ) \, dy & ($x \in \T$, periodic case).
 \end{dcases*}
$$ 
In \eqref{def:Lax} above and throughout the following, the Hilbert transform $H$ has to be understood to act as $H \varphi = (H \varphi_1, H \varphi_2)$ on each of the components of $\varphi \in L^2(X; \C^2)$. Since $H^* = -H$ is skew-symmetric and $(\mu_{\bm{A}})^*= \mu_{\bm{A}}$ is symmetric, we readily deduce the (formal) properties  
$$
(L_\Sb)^* = L_{\Sb} \quad \mbox{and} \quad (B_\Sb)^* = -B_{\Sb}.
$$ 
In fact, we will see below that $L_\Sb$ is a Hilbert--Schmidt operator if and only if $\vec{S}$ has finite energy. On the other hand, the operator $B_\Sb$ is clearly an unbounded operator to be defined on some suitable dense subset of $L^2(X; \C^2)$.

The main result of this paper shows that $L_\Sb$ and $B_\Sb$ provide  a Lax pair for the half-wave maps equation.

\begin{theorem}[Lax Pair of the Half-Wave Maps Equation] \label{thm:lax}
Let  $\vec{S} = \vec{S}(t,x)$ be a sufficiently regular solution of \eqref{eq:hwm} with either $x \in \R$ (real line case) or $x\in \T$ (periodic case). Then the following Lax equation holds true:
$$
\frac{d}{dt} {L}_\Sb = [B_\Sb, L_{\Sb}] ,
$$
where the operators $L_\Sb$ and $B_\Sb$ are defined in \eqref{def:Lax} above.
\end{theorem}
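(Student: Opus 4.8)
The plan is to compute $\frac{d}{dt}L_\Sb$ directly from the definition $L_\Sb = [H,\mu_\Sb]$ and then reorganize the result into the commutator $[B_\Sb, L_\Sb]$. Since $H$ is time-independent, we immediately get $\frac{d}{dt}L_\Sb = [H, \mu_{\pt_t\Sb}]$, and using the matrix form \eqref{eq:hwm_S} of the equation we have $\mu_{\pt_t\Sb} = -\frac{\ii}{2}\mu_{[\Sb,|\nabla|\Sb]} = -\frac{\ii}{2}\bigl(\mu_\Sb\mu_{|\nabla|\Sb} - \mu_{|\nabla|\Sb}\mu_\Sb\bigr)$, since pointwise matrix multiplication turns into composition of multiplication operators. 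Thus the left-hand side becomes $-\frac{\ii}{2}\bigl[H,\ \mu_\Sb\mu_{|\nabla|\Sb} - \mu_{|\nabla|\Sb}\mu_\Sb\bigr]$, a purely ``commutator-with-$H$'' expression in multiplication operators.

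For the right-hand side I would expand $[B_\Sb, L_\Sb] = [B_\Sb, H\mu_\Sb - \mu_\Sb H]$ using the three pieces of $B_\Sb$ from \eqref{def:Lax}, namely $B_\Sb = -\frac{\ii}{2}\mu_\Sb|\nabla| - \frac{\ii}{2}|\nabla|\mu_\Sb + \frac{\ii}{2}\mu_{|\nabla|\Sb}$. The key algebraic identity that makes everything collapse is the relation between $|\nabla|$ and the Hilbert transform: on $\R$ (and analogously on $\T$) one has $|\nabla| = H\,\pt_x = \pt_x H$, equivalently $|\nabla|$ and $H$ commute and $|\nabla| = H\partial_x$. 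I would substitute this everywhere so that every term involves only $H$, $\partial_x$, and multiplication operators. Then the commutators $[\partial_x, \mu_{\bm A}] = \mu_{\pt_x \bm A}$ are elementary, and the genuinely nonlocal terms are controlled by the classical commutator identities for $H$ — in particular the fact that $[H,\mu_{\bm A}]$ is a smoothing (order $-1$) operator and satisfies the ``Leibniz-type'' relations $[H, \mu_{\bm A}\mu_{\bm B}] = [H,\mu_{\bm A}]\mu_{\bm B} + \mu_{\bm A}[H,\mu_{\bm B}]$. Matching terms on both sides, the second-order (in $|\nabla|$) contributions should cancel because $S^2 = \mathds{1}$ (equation \eqref{eq:S_unitlength}) forces $\mu_\Sb\mu_\Sb = \mathrm{Id}$, and differentiating this gives $\mu_\Sb\mu_{\pt_x\Sb} + \mu_{\pt_x\Sb}\mu_\Sb = 0$, which is exactly the cancellation needed.

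The cleanest way to organize the bookkeeping, which I would adopt, is to conjugate: since $|\nabla|$ and $H$ commute, and $|\nabla|$ acts componentwise, one can compute $[B_\Sb, L_\Sb]$ by first isolating the ``principal'' part $-\frac{\ii}{2}(\mu_\Sb|\nabla| + |\nabla|\mu_\Sb)$ of $B_\Sb$ and observing that this is designed precisely so that its commutator with $\mu_\Sb$ reproduces $-\frac{\ii}{2}[\,|\nabla|\Sb,\,\cdot\,]$-type terms. Concretely, $[-\frac{\ii}{2}(\mu_\Sb|\nabla| + |\nabla|\mu_\Sb), \mu_\Sb] = -\frac{\ii}{2}\bigl(\mu_\Sb[|\nabla|,\mu_\Sb] + [|\nabla|,\mu_\Sb]\mu_\Sb\bigr)$ and $[|\nabla|,\mu_\Sb] = [H\pt_x, \mu_\Sb] = H\mu_{\pt_x\Sb} + [H,\mu_\Sb]\pt_x$; the extra $\mu_{|\nabla|\Sb}$ term in $B_\Sb$ is exactly what is needed to absorb the leftover. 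I expect the main obstacle to be purely organizational: keeping track of the order-zero ``error'' terms generated whenever $H$ is commuted past a multiplication operator, and verifying that all the non-smoothing pieces telescope. A secondary technical point, which I would address by a density/regularity remark rather than a hard estimate, is justifying that all the formal manipulations (in particular differentiating $L_\Sb$ in $t$ and composing unbounded operators with the Hilbert–Schmidt operator $L_\Sb$) are legitimate for ``sufficiently regular'' $\vec{S}$; the periodic case is handled identically using $|\nabla| = H\pt_x$ on $\T$ together with the fact that $H$ annihilates constants.
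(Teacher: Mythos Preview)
Your overall strategy is sound and in fact coincides closely with the paper's proof: compute $\frac{d}{dt}L_{\Sb} = [H, \mu_{\pt_t\Sb}]$, split off the piece $\frac{\ii}{2}[\mu_{|\nabla|\Sb}, L_{\Sb}]$ via Jacobi (equivalently, the derivation property you quote), and then evaluate $[\mu_{\Sb}|\nabla| + |\nabla|\mu_{\Sb}, L_{\Sb}]$ directly using $|\nabla|=H\pt_x$, $H^2=-\mathds{1}$, $\mu_{\Sb}^2=\mathds{1}$, and $[\pt_x,\mu_{\Sb}]=\mu_{\pt_x\Sb}$. The paper does exactly this and obtains $[\mu_{\Sb}|\nabla| + |\nabla|\mu_{\Sb}, L_{\Sb}] = -[\mu_{\Sb},\, \mu_{\pt_x\Sb} + H\mu_{\pt_x\Sb}H]$, just as your sketch anticipates.

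There is, however, one genuine missing ingredient. After the split you are left with matching $[\mu_{\Sb}, [\mu_{|\nabla|\Sb}, H]]$ (coming from the left-hand side) against $[\mu_{\Sb},\, \mu_{\pt_x\Sb} + H\mu_{\pt_x\Sb}H]$ (coming from the right-hand side). The ``Leibniz-type'' relation $[H,\mu_{\bm A}\mu_{\bm B}] = [H,\mu_{\bm A}]\mu_{\bm B} + \mu_{\bm A}[H,\mu_{\bm B}]$ you invoke is merely the derivation property of commutators, valid for any operator in place of $H$, and it gives no information about $[H, \mu_{|\nabla|\Sb}]$ itself. What is actually needed is \emph{Cotlar's product identity} for the Hilbert transform,
\[
H(fg) = (Hf)\,g + f\,(Hg) + H\bigl((Hf)(Hg)\bigr),
\]
which in commutator form reads $[H,\mu_g] = \mu_{Hg} + H\mu_{Hg}H$. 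Applied with $g=|\nabla|\Sb$ and using $H|\nabla|=-\pt_x$, this yields precisely $[\mu_{|\nabla|\Sb}, H] = \mu_{\pt_x\Sb} + H\mu_{\pt_x\Sb}H$, which closes the argument. This is the one place where a nontrivial, $H$-specific identity (beyond $H^2=-\mathds{1}$ and $|\nabla|=H\pt_x$) enters, and your proposal does not account for it; the obstacle is not ``purely organizational.''
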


\begin{proof}
We first recall that $d\mu_{\Sb}/dt = -\frac{\ii}{2} [\mu_{\Sb}, \mu_{|\nabla| \Sb}]$ from \eqref{eq:hwm_S}. By using Jacobi's identity $[A,[B,C]]+[C,[A,B]]+[B,[C,A]] \equiv 0$, we get
\be
\frac{d}{dt}{L}_\Sb = -\frac{\ii}{2} [H, [\mu_{\Sb}, \mu_{|\nabla| \Sb}]] = \frac{\ii}{2} \left ( [\mu_\Sb,[\mu_{|\nabla| \Sb}, H]] + [\mu_{|\nabla| \Sb},[H, \mu_{\Sb}]] \right ) .
\ee
Next, from Cotlar's product identity $H(fg) = Hf g + f Hg + H(Hf Hg)$ for the Hilbert transform we deduce
\be
[\mu_{|\nabla| \Sb}, H] = - \left ( \mu_{H|\nabla| \Sb} + H \mu_{H |\nabla| \Sb} H \right ) = \mu_{\pt_x \Sb} + H \mu_{\pt_x \Sb} H,
\ee
where we also used that $H |\nabla| = -\pt_x$ holds. Hence can write the time derivative of $L_{\Sb}$ as
\be \label{eq:magic0}
\frac{d}{dt} {L}_\Sb= \frac{\ii}{2} [\mu_{\Sb}, \mu_{\pt_x \Sb} + H \mu_{\pt_x \Sb} H] + \frac{\ii}{2} [\mu_{|\nabla| \Sb}, L_{\Sb}].
\ee
It remains to show that the first commutator on right-hand side can be written as a commutator with $L_{\Sb}$. This can be seen as follows. Using $|\nabla|=H\pt_x$ again, we get
\be \label{eq:magic1}
[\mu_{\Sb} |\nabla| , \mu_{\Sb} H] = [ \mu_{\Sb} H \pt_x, \mu_{\Sb}  H] = \mu_{\Sb} H [\pt_x, \mu_{\Sb}] H = \mu_{\Sb} H \mu_{\pt_x \Sb} H,
\ee  
where in the last step we used $[\pt_x, \mu_{\Sb}] = \mu_{\pt_x \Sb}$ by Leibniz' rule. On the other hand, in view of $\mu_{\Sb} \mu_{\Sb} = \mathds{1}$ by \eqref{eq:pauli_product} and \eqref{eq:S_unitlength} together with $H \pt_x H = H^2 \pt_x = -\pt_x$, we see that
\begin{align*}
[\mu_{\Sb} |\nabla|, H \mu_{\Sb}] &  = [\mu_{\Sb} H \pt_x, H \mu_{\Sb}]  = \mu_{\Sb} H \pt_x H \mu_{\Sb} - H \mu_{\Sb} \mu_{\Sb} H \pt_x \\
&  = -\mu_{\Sb} \pt_x \mu_{\Sb} + \pt_x =  -\mu_{\Sb} [ \pt_x, \mu_{\Sb}] = -\mu_{\Sb} \mu_{\pt_x \Sb}.
\end{align*}
Recall now the definition $L_{\Sb} = H \mu_{\Sb} - \mu_{\Sb} H$.  Thus if we now combine \eqref{eq:magic1} with the identity found above, we deduce the identity
\be \label{eq:magic2}
[ \mu_{\Sb} |\nabla|, L_{\Sb}] = - \mu_{\Sb} \mu_{\pt_x \Sb} - \mu_{\Sb} H \mu_{\pt_x \Sb} H 
\ee
Since $(L_{\Sb})^*= L_{\Sb}$, we can take adjoints to calculate $[ |\nabla| \mu_{\Sb}, L_{\Sb}] $. However, it is also interesting to make the computation directly, namely
$$[|\nabla|\mu_{\Sb}  , H\mu_{\Sb}] = [ \pt_x H \mu_{\Sb} , H\mu_{\Sb}  ] = [\pt_x, H\mu_{\Sb}] H  \mu_{\Sb} = H \mu_{\pt_x \Sb} H \mu_{\Sb}\ ,$$
and
\begin{align*}
[ |\nabla|\mu_{\Sb}, \mu_{\Sb}H] &  = [H \pt_x\mu_{\Sb} , \mu_{\Sb}H]  =  H \pt_x\mu_{\Sb} \mu_{\Sb} H-\mu_{\Sb} H \pt_x H \mu_{\Sb} \\
&  = -\pt_x  +\mu_{\Sb} \pt_x \mu_{\Sb}=  \mu_{\Sb} [ \pt_x, \mu_{\Sb}] =  \mu_{\Sb}\mu_{\pt_x \Sb}=-\mu_{\pt_x \Sb}\mu_{\Sb},
\end{align*}
where last identity follows from differentiating $\mu_{\Sb} \mu_{\Sb}  = \mathds{1}$. Thus we conclude that
\be \label{eq:magic3}
[ |\nabla| \mu_{\Sb}, L_{\Sb}] = \mu_{\pt_x \Sb} \mu_{\Sb} + H \mu_{\pt_x \Sb} H \mu_{\Sb}. 
\ee
By adding \eqref{eq:magic2} and \eqref{eq:magic3}, we arrive at
$$
[ \mu_{\Sb} |\nabla| + |\nabla| \mu_{\Sb}, L_{\Sb}] = - [ \mu_{\Sb}, \mu_{\pt_x \Sb} + H \mu_{\pt_x \Sb} H ]. 
$$
If we now recall the definition of the operator 
$$
B_{\Sb} = -\frac{\ii}{2} ( \mu_{\Sb} |\nabla| + |\nabla| \mu_{\Sb}) + \frac{\ii}{2} \mu_{|\nabla| \Sb},
$$
 we deduce from \eqref{eq:magic0} that $dL_{\Sb}/dt = [B_{\Sb}, L_{\Sb}]$ holds, as desired.

The proof of Theorem \ref{thm:lax} is now complete. 
\end{proof}

We now discuss some direct consequences of Theorem \ref{thm:lax}.

\begin{corollary}
We have the following (formal) conservation laws:
$$
\Tr ( |L_\Sb|^p) = \mathrm{const.}
$$	
for any $1 \leq p < \infty$.
\end{corollary}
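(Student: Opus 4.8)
The plan is to deduce the corollary from the \emph{isospectrality} of the Lax flow, which is the standard consequence of Theorem~\ref{thm:lax} combined with the skew-symmetry $(B_\Sb)^*=-B_\Sb$. First I would introduce the propagator $U(t)$ defined as the solution of the linear evolution equation $\frac{d}{dt}U(t)=B_\Sb U(t)$ with $U(0)=\mathrm{Id}$ on $L^2(X;\C^2)$. Since $B_\Sb$ is (formally) skew-adjoint, $U(t)$ is a one-parameter family of unitary operators. A one-line computation using the Lax equation then gives
\[
\frac{d}{dt}\bigl(U(t)^*\,L_\Sb\,U(t)\bigr)=U(t)^*\Bigl(\frac{d}{dt}L_\Sb-[B_\Sb,L_\Sb]\Bigr)U(t)=0 ,
\]
so that $L_\Sb(t)=U(t)\,L_\Sb(0)\,U(t)^*$ for all $t$ in the interval of existence; in particular, $L_\Sb(t)$ and $L_\Sb(0)$ are unitarily equivalent self-adjoint operators.

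From here the conservation laws follow at once. By the spectral theorem $|L_\Sb(t)|^p=U(t)\,|L_\Sb(0)|^p\,U(t)^*$, and hence, using the unitary invariance (equivalently, the cyclicity) of the trace, $\Tr(|L_\Sb(t)|^p)=\Tr\bigl(U(t)\,|L_\Sb(0)|^p\,U(t)^*\bigr)=\Tr(|L_\Sb(0)|^p)$. Alternatively — and this makes the purely algebraic content transparent — for an even integer $p=2k$ one may differentiate directly: $\frac{d}{dt}\Tr(L_\Sb^{2k})=2k\,\Tr\bigl(L_\Sb^{2k-1}[B_\Sb,L_\Sb]\bigr)=0$ by cyclicity of the trace, and the case of a general exponent $p$ then follows from the eigenvalue representation $\Tr(|L_\Sb|^p)=\sum_j|\lambda_j(t)|^p$ together with the fact that the eigenvalues $\lambda_j$ of $L_\Sb$ are independent of $t$.

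The only genuine difficulty is functional-analytic rather than algebraic — which is precisely why the statement is phrased as a \emph{formal} conservation law. One must (i) construct the propagator $U(t)$, i.e. solve the linear nonlocal hyperbolic equation $\frac{d}{dt}\varphi=B_\Sb\varphi$ and show that its solution map is unitary; here $B_\Sb$ is an unbounded skew-adjoint operator whose leading part $-\frac{\ii}{2}\bigl(\mu_\Sb|\nabla|+|\nabla|\mu_\Sb\bigr)$ is a symmetrized first-order nonlocal operator with bounded $\mathfrak{su}(2)$-valued coefficients, so that standard energy estimates (or a Kato-type theory for time-dependent generators) should apply on any time interval on which $\vec S$ stays sufficiently regular; and (ii) ensure that $L_\Sb$ lies in the Schatten ideal $\mathfrak{S}^p$, so that $\Tr(|L_\Sb|^p)$ is finite. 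For $p=2$ the latter is exactly the Hilbert--Schmidt property recorded earlier in the paper, valid whenever $\vec S$ has finite energy, and since $\mathfrak{S}^2\subset\mathfrak{S}^p$ for every $p\ge 2$ the corollary is unconditional in that range once (i) is in hand; for $1\le p<2$ one needs additional decay and smoothness of $\vec S$ to place $L_\Sb$ in the smaller ideal $\mathfrak{S}^p$. Neither the isospectral identity $L_\Sb(t)=U(t)L_\Sb(0)U(t)^*$ nor the vanishing of $\frac{d}{dt}\Tr(|L_\Sb|^p)$ is affected by this regularity bookkeeping, and I expect the construction of the propagator for the unbounded, time-dependent generator $B_\Sb$ to be the main obstacle to a fully rigorous version of the statement.
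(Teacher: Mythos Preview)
Your argument is correct and is exactly the standard isospectrality route that the paper tacitly invokes: the corollary is stated there without proof as an immediate (formal) consequence of the Lax equation, and the subsequent remarks only compute special cases and identify the Schatten--Besov correspondence via Peller's theorem. Your write-up spells out precisely the missing step---construct the unitary propagator for the skew-adjoint generator $B_\Sb$, conjugate, and use unitary invariance of the trace---and your caveats about the functional-analytic content (existence of $U(t)$, membership of $L_\Sb$ in $\mathfrak{S}^p$) match the paper's own hedging via the word ``formal'' and its Remark~3.
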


\begin{remarks*}
1) Consider the real line case when $x \in \R$. Then the operator $L_\Sb$ has the symmetric kernel
$$
K_\Sb(x,y) = \frac{1}{\pi} \frac{\Sb(x)-\Sb(y)}{x-y} \in \C^{2 \times 2} .
$$
Using that $\Tr_{\C^2} ( \bm{A} \bm{A}^*) = 2 |\vec{A}|^2$ thanks to \eqref{eq:pauli_product}, we find that the squared Hilbert--Schmidt norm of the Lax operator $L_\Sb$ is given by
\be \label{eq:TrL2_En}
\Tr ( |L_\Sb|^2) = \frac{2}{\pi^2} \int \! \! \int_{\R \times \R} \frac{|\vec{S}(x)- \vec{S}(y)|^2}{|x-y|^2} \,dx \,dy = \frac{8}{\pi} E[\vec{S}],
\ee
where we used the well-known identity 
$$
\int_{\R} f |\nabla| f \, dx = \frac{1}{2 \pi} \int \! \! \int_{\R \times \R} \frac{|f(x)-f(y)|^2}{|x-y|^2} dx \, dy.
$$
Hence $\Tr(|L_{\Sb}|^2)$ is equal to the conserved energy $E[\vec{S}]$ for \eqref{eq:hwm} (up to a multiplicative constant). Moreover, we deduce that $L_{\Sb}$ is Hilbert--Schmidt if and only if $E[\vec{S}] < +\infty$. 

2) In the periodic case when $x \in \T$, an analogous calculation yields that
\begin{align*}
\Tr (|L_{\Sb}|^2) & = 2 \int \! \! \int_{\T \times \T} |\vec{S}(x)-\vec{S}(y)|^2 \cot^2 \left ( \frac{x-y}{2} \right ) dx \, dy \\
& = 2 \int \! \! \int_{\T \times \T} \frac{|\vec{S}(x)-\vec{S}(y)|^2}{\sin^2(\frac{1}{2} (x-y))} \, dx \, dy - 2 \int \! \! \int_{\T \times \T} |\vec{S}(x)-\vec{S}(y)|^2 \, dx \, dy \\
& = 8 \pi E[\vec{S}] + 4 \left | \int_{\T} \vec{S}(x) \, dx \right |^2 - 16 \pi^2,
\end{align*}
which amounts to a combination of the conserved energy and the square of the conserved total spin $\int_{\T} \vec{S}$, plus some numerical constant. Again, we see that $L_\Sb$ is Hilbert-Schmidt if and only if $\vec{S}$ has finite energy.

3) More generally, by decomposing $L^2(X,\C )=L^2_+(X,\C )\oplus L^2_-(X,\C)$ according to the sign of the Fourier spectrum, it is easy to make the link between operators $L_{\Sb}$ and Hankel operators with matrix symbols given by symbols $\Sb$; see Peller \cite{Pe2}. As a consequence from Peller's theorem \cite{Pe1}, we conclude (both in real line and the periodic case) that the following norm equivalence to homogeneous Besov norms of $\vec{S}$ holds:
\be 
\Tr(|L_{\Sb}|^p) \sim \| \vec{S} \|_{\dot{B}^{1/p}_{p,p}}^p .
\ee
In particular, applying this result for $p=1$, if the initial value of $\vec{S}$ is smooth enough, then $|\nabla |\vec{S}, \pt_x \vec{S}$ and $\pt_t\vec{S}$ are uniformly bounded in $L^1(X,\R^3)$ for $t$ in the interval of existence of the solution.

4) 
As an instructive example, let us explicitly compute the Lax operator for the profile $\vec{Q}_v : \R \to \Ss^2$ of a traveling solitary wave of degree $m=1$; see Section \ref{sec:hwm-summary} below for more details on solitary waves for the half-wave maps equation. As a traveling solitary wave profiles of degree $m=1$ with velocity $v \in \R$ and $|v| < 1$, we can take (without loss of generality) the following function
$$
\vec{Q}_v(x)= \left (\alpha_v f(x), \alpha_v g(x), v \right ) = \left ( \alpha_v \frac{x^2-1}{1+x^2}, \alpha_v \frac{-2x}{1+x^2}, v \right )  \quad \mbox{with $\alpha_v = \sqrt{1-v^2}$}.
$$
With the help of the singular integral expression for $H$, we verify the commutator formulas
$$
[H, f] u =  \langle \psi, u \rangle \varphi + \langle  \varphi, u \rangle \psi, \quad [H,g] u = - \langle \varphi, u \rangle \varphi + \langle \psi, u \rangle \psi, 
$$
with the functions $\varphi(x) = \sqrt{\frac{2}{\pi}} \frac{1}{1+x^2}$, $\psi(x) = \sqrt{\frac{1}{2 \pi}} \frac{2x}{1+x^2}$, and $\langle \cdot, \cdot \rangle$ being the scalar product on $L^2(\R;\C)$. Hence it follows that the corresponding Lax operator for $\vec{Q}_v$ is found to be
$$
L_{\bm{Q}_v}  = [H, \alpha_v f] \sigma_1 + [H, \alpha_v g] \sigma_2 + [H, v] \sigma_3 = \alpha_v \left [ \begin{array}{cc} 0 & [H,f] - \ii [H, g] \\  {[H,f] + \ii [H,g]} & 0 \end{array} \right ]
$$
acting on $L^2(\R; \C^2) = L^2(\R; \C) \oplus L^2(\R; \C)$. It is evident that the range of $L_{\bm{Q}_v}$ belongs to  the four-dimensional space spanned by orthonormal basis $\{  {\varphi \choose 0},  {\psi \choose 0},{0  \choose \varphi},  {0 \choose \psi}\}$. With respect to this basis, it is easy to check that $L_{\bm{Q}_v}$ has the corresponding matrix
$$
M = \alpha_v \left [ \begin{array}{rrrr} 0 & 0 &  +\ii & 1 \\ 0 & 0 & 1 & -\ii \\ -\ii & 1 & 0 & 0 \\ 1 & +\ii & 0 & 0  \end{array} \right ]  .
$$
Its eigenvalues (counting multiplicities)  are found to be $\lambda_1 = -2 \alpha_v, \lambda_2 = \lambda_3 = 0, \lambda_4 = +2 \alpha_v$. Thus the spectrum of the corresponding Lax operator of $\vec{Q}_v$ is found to be
$$
\mathrm{spec} \left ( L_{\bm{Q}_v}  \right ) = \{ 0, -2 \alpha_v, +2 \alpha_v \} \quad \mbox{with $\alpha_v = \sqrt{1-v^2}$}.
$$
Also note that $\mathrm{Tr} (|L_{\bm{Q}_v}|^2) = 8 \alpha_v^2 =(1-v^2) E[\vec{Q}_v]/\pi$, which is of course in accordance with relation \eqref{eq:TrL2_En} above and identity \eqref{eq:EQv} below.
\end{remarks*}

Using Kronecker's theorem characterizing finite rank Hankel operators \cite{Pe2}, another fundamental consequence of Theorem \ref{thm:lax} is as follows.

\begin{corollary}
Let $\vec{S}=\vec{S}(t,x)$ solve \eqref{eq:hwm} for every $t$ in an interval $I$ containing 0. 
\begin{itemize}
\item If $x\in \R$ and $\vec{S}(0,x)$ is a rational function of $x$, then so is $\vec{S}(t,x)$ for every $t\in I$.
\item If $x\in \T$ and $\vec{S}(0,x)$ is a rational function of ${\rm e}^{\ii x}$, then so is $\vec{S}(t,x)$ for every $t\in I$.
\end{itemize}
\end{corollary}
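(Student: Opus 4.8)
The plan is to read off rationality of $\vec S$ from the \emph{rank} of the Lax operator $L_\Sb$, and to use that this rank is preserved by the flow, being an invariant of the unitary conjugation produced by Theorem~\ref{thm:lax}. I would carry this out in three steps.

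\emph{Step 1: the rank of $L_\Sb$ is constant along the flow.} For a sufficiently regular solution $\vec S$ on $I$, consider the linear evolution $\frac{d}{dt}U(t)=B_{\Sb(t)}U(t)$ with $U(0)=\mathrm{Id}$ on $L^2(X;\C^2)$. Since $(B_\Sb)^*=-B_\Sb$, the propagator $U(t)$ is unitary for $t\in I$. Differentiating $U(t)^*L_{\Sb(t)}U(t)$ and inserting the Lax equation $\frac{d}{dt}L_\Sb=[B_\Sb,L_\Sb]$ shows this quantity is constant, i.e.
$$
L_{\Sb(t)}=U(t)\,L_{\Sb(0)}\,U(t)^*,\qquad t\in I .
$$
Hence $L_{\Sb(t)}$ and $L_{\Sb(0)}$ are unitarily equivalent; in particular $\mathrm{rank}\,L_{\Sb(t)}=\mathrm{rank}\,L_{\Sb(0)}$ for every $t\in I$ (and, as the preceding Corollary records, their singular values coincide).

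\emph{Step 2: finiteness of $\mathrm{rank}\,L_\Sb$ is equivalent to rationality of $\vec S$.} As indicated in Remark~3 above, split $L^2(X;\C)=L^2_+\oplus L^2_-$ according to the sign of the Fourier spectrum (with the zero mode split off in the periodic case). The Hilbert transform is diagonalized by this splitting, acting as a nonzero purely imaginary constant on $L^2_+$ and its negative on $L^2_-$, so $[H,\mu_b]$ is a nonzero scalar multiple of $[\Pi_+,\mu_b]$, whose off-diagonal blocks are, up to adjoints, the Hankel operators $\Gamma_b$ and $\Gamma_{\bar b}$ (here $\Gamma_c\colon L^2_+\to L^2_-$, $f\mapsto\Pi_-(cf)$). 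Applying this entrywise to the $2\times2$ matrix symbol $\Sb$, the operator $L_\Sb$ has finite rank if and only if all the scalar Hankel operators with the entries of $\Sb$, and their complex conjugates, as symbols have finite rank. By Kronecker's theorem \cite{Pe2} this holds iff those entries are rational — as functions of $\mathrm{e}^{\ii x}$ in the periodic case, of $x$ in the real-line case — equivalently, since the entries of $\Sb$ are real-linear combinations of the components of $\vec S$, iff $\vec S$ is a rational function of the relevant variable (the zero Fourier mode, i.e.\ the value of $\vec S$ at infinity, contributing only a constant, which is admissible).

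\emph{Step 3: conclusion.} If $\vec S(0,\cdot)$ is rational, Step 2 gives $\mathrm{rank}\,L_{\Sb(0)}<\infty$; by Step 1, $\mathrm{rank}\,L_{\Sb(t)}=\mathrm{rank}\,L_{\Sb(0)}<\infty$ for all $t\in I$; and Step 2, applied in reverse, shows $\vec S(t,\cdot)$ is rational for every $t\in I$ — of the same type, and of degree bounded uniformly on $I$, since the rank controls the number of poles. This proves both bullet points.

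I expect Step 1 to be the main obstacle: Theorem~\ref{thm:lax} is established at a formal level, and upgrading it to a bona fide unitary conjugation requires constructing the propagator $U(t)$ for the \emph{unbounded}, time-dependent generator $B_{\Sb(t)}$, establishing its unitarity, and justifying the differentiation of $t\mapsto U(t)^*L_{\Sb(t)}U(t)$ in an appropriate (e.g.\ Hilbert--Schmidt) topology. For rational initial data — which is smooth and, by Remark~3 above, well-localized, with $\pt_x\vec S$ and $|\nabla|\vec S$ uniformly in $L^1\cap L^\infty$ over $I$ — this is within reach of standard energy estimates combined with a Duhamel/regularization scheme, but it is the one step that is not purely algebraic. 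The bookkeeping of the zero Fourier mode in the periodic case, and the precise meaning of ``rational'' for a non-decaying $\Ss^2$-valued field, are routine points to be made explicit.
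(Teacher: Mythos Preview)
Your proposal is correct and follows exactly the approach the paper indicates: the paper merely states that the Corollary follows from Kronecker's theorem characterizing finite-rank Hankel operators together with the fact (noted in the subsequent Remark) that the rank of $L_{\Sb}$ is constant in time by the Lax equation. You have supplied the details of this argument faithfully, including the honest caveat about rigorously constructing the unitary propagator for the unbounded $B_{\Sb(t)}$, which the paper leaves at the formal level.
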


\begin{remark*}
In fact, the rank of $L_{\Sb}$ (which is constant in time by the Lax equation) can be used to bound the number of poles of the rational functions in the components of $\vec{S}(t,x)$. In particular, we expect multi-soliton solutions for the half-wave maps equation in the subset of rational solutions.
\end{remark*}

\subsection*{Acknowledgments} P.\,G. was supported by the French A.N.R. through grant ANAE 13-BS01-0010-03. E.\,L.~was supported by the Swiss National Science Foundation (SNF) through Grant No.~200021-169646. Furthermore, E.\,~L.~is grateful to E.~Langmann and M.~Stone for pointing out reference \cite{Zh}. The authors wish to thank for the kind hospitality of the Mathematisches Forschungsinstitut Oberwolfach (MFO) during their stay in June 2017. Finally, we thank the anonymous referee for useful comments on this paper.

\section{Link to Haldane--Shastry Models and Solitary Waves}

\label{sec:hwm-summary}

\subsection{Relation to Haldane--Shastry Models}

Following \cite{LS} and \cite{Zh}, we explain how the evolution equation \eqref{eq:hwm} can be formally obtained from a discrete quantum spin system of Haldane--Shastry (HS) type by taking the classical (large-spin) limit followed by taking the continuum limit. Let us also mention that HS-type model have a close connection to {\em Calogero--Moser--Sutherland models} which have been intensively studied in the past decades; see \cite{Su} for a review on these models.

To illustrate this procedure, we consider the periodic setting on $\T$. (The formal arguments here carry over to the real line case $\R$ with some modifications.) Let $N \geq 2$ be an integer and divide $\T$ by introducing equally spaced lattice points $x_j = 2\pi i k/N$ with $k=1, \ldots, N$ and $x_0 \equiv x_N \mod 2 \pi$. At each site $x_j \in \T$, we attach a quantum spin of size $s \in \frac{1}{2} \N$ (integer or half-integer) and we consider the quantum Hamiltonian $H_{HS}$ defined as 
\be
H_{HS} = \sum_{j < k}^N \frac{1 - \vec{\mathcal{S}} (x_j) \cdot \vec{\mathcal{S}}(x_k)}{\sin^2 [ \frac{1}{2} (x_j - x_k)]},
\ee
acting on the Hilbert space $\mathcal{H} = (\C^{2s +1})^{\otimes N}$ (the $N$-fold tensor product of $\C^{2s+1}$). Note that $\sin^{2} ( \frac{1}{2} (x_j-x_k)) = \frac{1}{2} |e^{\ii x_j}- e^{\ii x_k}|^2$ is proportional to the squared chord distance between the points $e^{\ii x_j}$ and $e^{\ii x_k}$ on the unit circle $\Ss^1$. Here $\vec{\mathcal{S}}(x_j) = (\mathcal{S}_1(x_j), \mathcal{S}_2(x_j), \mathcal{S}_3(x_j))$ is the quantum spin operator associated to site $x_j$, where its entries are given by the generators of the spin-$s$-representation of $\mathfrak{su}(2)$ acting on $\C^{2s+1}$, rescaled by $s^{-1}$ for later convenience. (For $s=1/2$, the operators $\mathcal{S}_i$ are given by the Pauli matrices $\sigma_i$ acting on $\C^2$.) We have the general commutation relations
\be
[\mathcal{S}_\alpha(x_j), \mathcal{S}_\beta(x_k)] =  \frac{\ii}{s} \eps_{\alpha \beta \gamma} \mathcal{S}_\gamma(x_j) \delta_{jk} .
\ee
In summary, the above Hamiltonian $H_{HS}$ defines a Haldane--Shastry type quantum spin chain with long-range $1/|x|^2$ interactions of ferromagnetic type (because aligning the spins in the same direction is energetically favorable).

Now  we study the {\em classical (large-spin) limit} by passing to $s \to +\infty$ (which can also viewed as a semi-classical limit with parameter $\hbar=s^{-1}\to 0$). In heuristic terms, this passage amounts to replacing the quantum spins by classical spin variables $\vec{S}(x_j) \in \Ss^2$, i.\,e., unit vectors in $\R^3$. We refer to e.\,g.~to \cite{FKL, Li} where semi-classical spin limits were rigorously studied in the context  spin dynamics (with smooth short-ranged interactions) and partition functions for spin systems, respectively. In summary, we (formally at least) obtain as a classical limit of $H_{HS}$ the following Hamiltonian
\be 
H_{HS}^{\mathrm{(classical)}} = \sum_{j < k}^N \frac{1 - \vec{S}(x_j) \cdot \vec{S}(x_k)}{\sin^2 [ \frac{1}{2} ( x_j - x_k) ]} ,
\ee
which is defined on the classical phase space $\Gamma = \prod_{x_j \in \T} \Ss^2$ (i.\,e.~ the $N$-fold cartesian product of $\Ss^2$ with itself). On the space $\Gamma$, we have the canonical Poisson bracket which reads
\be
\{ S_\alpha(x_j), S_\beta(x_k) \} = \eps_{\alpha \beta \gamma} S_\gamma(x_j) \delta_{jk}.
\ee
Using that $|\vec{S}(x_j)|^2=1$, we readily check that the equation of motions $\pt_t \vec{S}(t,x_k) = \{ \mathcal{S}(t,x_k), H_{HS}^{\mathrm{(classical)}} \}$ are found to be
\be \label{eq:HS_disc}
\pt_t \vec{S}(t, x_k) = \vec{S}(t,x_k) \wedge \left ( \sum_{j \neq k}^N \frac{\vec{S}(t,x_k) - \vec{S}(t,x_j)}{ \sin^2 [\frac{1}{2}(x_j - x_k)]} \right ) 
\ee
for every $k=1, \ldots, N$. Now if we pass to the continuum limit $N \to +\infty$ so that the lattices site $x_j \in \T$ range over all of $\Ss^2$, we formally arrive (after a suitable time rescaling $t \to \mbox{const} \, N^{-1} t$) at the half-wave maps equation
\be \label{eq:HS_cont}
\pt_t \vec{S} = \vec{S} \wedge |\nabla| \vec{S}
\ee
posed on $\T$, where we recall that the singular integral expression for $|\nabla|$ in the periodic setting is $(|\nabla| f)(x) = \frac{1}{4 \pi} \mathrm{p.v.} \int_{\T} \frac{f(x)-f(y)}{\sin^2((x-y)/2)} dy$ for $x \in \T$. A rigorous investigation of this continuum limit procedure passing from \eqref{eq:HS_disc} to \eqref{eq:HS_cont} will be addressed in \cite{BuLe}.

With regard to complete integrability, let us mention that the quantum Hamiltonian $H_{HS}$ is known to admit a (quantum) Lax pair; see \cite{Su} for a review on Haldane--Shastry models. For instance, if we take $s=1/2$, the (quantum) Lax operator $\widehat{L}$ has operator-valued entries that read
$$
\widehat{L}_{jk} = \ii (1-\delta_{jk}) \frac{\mathds{1} + \vec{\mathcal{S}}(x_j) \cdot \vec{\mathcal{S}}(x_k)}{x_j -x _l} \in \C^{2 \times 2} \quad \mbox{for $k,l=1, \ldots, N$}.
$$
However, it seems not to be a straightforward procedure (not even formally) to deduce the right expression for a Lax operator for the classical models given by $H_{HS}^{\mathrm{(classical)}}$ (discrete case) and the half-wave maps equation (continuum case). In fact, to the best of our knowledge, finding a Lax pair for the discrete classical spin chain given by $H^{\mathrm{(classical)}}_{HS}$ has not been achieved yet. A suitably discretized version of $L_{\Sb}$ seems to be a natural candidate for a Lax operator for $H_{HS}^{\mathrm{(classical)}}$.

\subsection{Traveling Solitary Waves} 
 
A remarkable fact recently found in \cite{LS,Zh} is that the half-wave maps equation admits non-trivial traveling solitary wave solutions 
\be
\vec{S}(t,x) = \vec{Q}_v(x- vt),
\ee 
where the parameter $v \in \R$ denotes the velocity.  It is easy to check that the profile $\vec{Q}_v=\vec{Q}_v(x)$ has to satisfy the nonlinear equation
\be \label{eq:Qv}
\vec{Q}_v \wedge |\nabla| \vec{Q}_v - v \pt_x \vec{Q}_v = 0 .
\ee
For the special case of vanishing velocity $v=0$, we obtain the so-called {\em half-harmonic maps equation} $\vec{Q} \wedge |\nabla| \vec{Q}=0$. In fact, this equation was recently introduced in \cite{LR}  by a completely different motivation coming from conformally invariant problems in the study of PDE.

From \cite{LS}  we recall the following explicit classification result for profiles $\vec{Q}_v : \R \to \Ss^2$ with finite energy.
\begin{itemize}
\item If $|v| < 1$, then any profile $\vec{Q}_v : \R \to \Ss^2$ with finite energy of the form
$$
\vec{Q}_v(x) = \left ( \sqrt{1-v^2} \, \mathrm{Re}  \, B(x), \sqrt{1-v^2} \, \mathrm{Im} \, B(x), vÊ\right )
$$
up to rotations on $\Ss^2$ and a complex conjugation symmetry. Here $B=B(x+\ii y)$ is a finite Blaschke product defined on the (closed) upper complex plane $\overline{\C}_+$, i.\,e., we have
$$
B(z) =  \prod_{k=1}^m \frac{ z - z_k}{z - \overline{z}_k},
$$ 
with some  $m \in \N$ and $z_1, \ldots, z_m \in \C_+$. Note that $m=0$ corresponds to the case of trivial constant profile $\vec{Q}_v$.

\item If $|v| \geq 1$, then any profile $\vec{Q}_v : \R \to \Ss^2$ with finite energy is trivial, i.\,e., 
$$
\vec{Q}_v(x) \equiv \vec{P}
$$
for some $\vec{P} \in \Ss^2$.
\end{itemize}
The arguments in \cite{LS} exploit a close connection to minimal surfaces inside the unit ball, where $\vec{Q}_v(x)$ arise as a boundary curve on $\Ss^2$. We remark that the dependence on $v$ in the expression for $\vec{Q}_v$ can be indeed be regarded as a Lorentz boost implemented by the conformal group (i.\,e.~the M\"obius group) acting on the target sphere $\Ss^2$; see \cite{LS}. Furthermore, the energy of $\vec{Q}_v$ is found to be quantized by multiples of $\pi$ such that
\be \label{eq:EQv}
E[\vec{Q}_v] = (1-v^2) \cdot \pi m .
\ee 
Thus there exist traveling solitary waves for \eqref{eq:hwm} with arbitrarily small energy. This is in stark contrast to other energy-critical dispersive geometric PDEs (e.\,g.~energy-critical Schr\"odinger maps, wave maps, and Yang--Mills equations) where a certain energy threshold exists, below which finite-energy solutions scatter to ``free'' solutions.

Finally, we refer to \cite{LS} for a complete spectral analysis of the linearized operator $\vec{Q}_v$ in the static case $v=0$.

\section{Extension of Results to Target $\HH^2$}

A variant of geometric interest of the half-wave maps equation occurs when the target two-sphere $\Ss^2$ is replaced by the hyperbolic plane $\HH^2$, which is a non-compact K\"ahler manifold. To formulate the corresponding evolution equation, we regard $\HH^2$ as embedded into Minkowsi three-space $\R^{1,2}$ as a unit pseudosphere with positive component $X_1  >0$, i.\,e., we set
\be
\HH^2 = \left \{ \vec{X} \in \R^{1,2} : -X_1^2+X_2^2+X_3^2 = -1, \; X_1 > 0  \right \} .
\ee 
Let $(\eta_{ij}) =\mathrm{diag}(-1,+1,+1)$ denote the Lorentzian metric on $\R^{1,2}$. We define the (non-definite) inner product
$$
\vec{X} \cdot_\eta \vec{Y} = (\eta \vec{X}) \cdot Y = -X_1 Y_1 + X_2 Y_2 + X_3 Y_3.
$$
Likewise, we introduce the cross-type product for vectors $\vec{X}, \vec{Y} \in \R^{1,2}$ by setting
\be
\vec{X} \wedge_{\eta} \vec{Y} = \eta ( \vec{X} \wedge \vec{Y} ) = (-(X_2 Y_3- X_3 Y_2), X_3 Y_1 - X_1 Y_3,  X_1 Y_2 - X_2 Y_1 ).
\ee
Now the half-wave maps equation on $\R$ or $\T$ with hyperbolic plane target $\HH^2$ is given by
\be \tag{HWM$_{\HH^2}$} \label{eq:hwmh}
\pt_t \vec{S} = \vec{S} \wedge_{\eta} |\nabla| \vec{S} .
\ee
This is a Hamiltonian equation with the corresponding conserved energy 
\be
E_\eta[\vec{S}] = \frac 1 2 \int \vec{S} \cdot_\eta |\nabla| \vec{S} = \frac{1}{2} \int \left ( -(S_1 |\nabla| S_1) + (S_2 |\nabla| S_2) + (S_3 |\nabla| S_3) \right ) .
\ee
Note that $E_\eta$ is not positive definite due to indefinite scalar product $\cdot_\eta$.

To show that this equation admits a Lax pair, we proceed as follows. Consider the matrices 
\be
\rho_1 = \left ( \begin{array}{cc} \ii & 0 \\ 0 & -\ii \end{array} \right ) , \quad \rho_2 = \left ( \begin{array}{cc} 0 & 1 \\ 1 & 0 \end{array} \right ), \quad \rho_3 = \left ( \begin{array}{cc} 0 & \ii \\ -\ii & 0 \end{array} \right ) ,
\ee
which span the Lie algebra $\mathfrak{su}(1,1)$. Now given a vector $\vec{X} \in \R^{1,2}$, we define the complex $2 \times 2$-matrix given by
\be
\bm{\tilde{X}} = \vec{X} \cdot \bm{\rho} = \sum_{j=1}^3 X_j \rho_j = \left ( \begin{array}{cc} \ii X_1 & X_2 + \ii X_3 \\ X_2 - \ii X_3 & - \ii X_1 \end{array} \right ).
\ee
It is straightforward to verify that
 \be
 (\vec{X} \cdot \bm{\rho})(\vec{Y} \cdot \bm{\rho}) = (\vec{X} \cdot_\eta \vec{Y}) \mathds{1} +  ( \vec{X} \wedge_\eta \vec{Y}) \cdot \bm{\rho}.
 \ee
 In particular, we find that $\bm{\tilde{S}}^2 = -\mathds{1}$ if and only if $\vec{S} \cdot_\eta \vec{S} = -1$. Now, we can recast \eqref{eq:hwmh} into the following form
 \be
 \pt_t \bm{\tilde{S}} = \frac{1}{2} [ \bm{\tilde{S}}, |\nabla| \bm{\tilde{S}} ].
 \ee
 
 \begin{theorem}[Lax Pair for Half-Wave Maps with Target $\HH^2$]
 Let $\vec{S}=\vec{S}(t,x)$ be a sufficiently regular solution of the half-wave maps equation \eqref{eq:hwmh}, where either $x \in \R$ or $x \in \T$. Then the following Lax equation holds true:
 $$
 \frac{d}{dt} L_{\bm{\tilde{S}}} = \ii [ B_{\bm{\tilde{S}}}, L_{\bm{\tilde{S}}} ],
 $$
 where the operators $L_{\bm{\tilde{S}}}$ and $B_{\bm{\tilde{S}}}$ are given by \eqref{def:Lax} with $\bm{S}$ replaced by $\bm{\tilde{S}}$.
 \end{theorem}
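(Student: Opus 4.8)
The plan is to run the proof of Theorem \ref{thm:lax} essentially word for word, keeping track of the two algebraic differences between the sphere and the hyperbolic case. The first is that the commutator form of \eqref{eq:hwmh} recorded just above translates, in the language of multiplication operators, into
\[
\frac{d}{dt}\mu_{\bm{\tilde{S}}} = \tfrac12\,[\mu_{\bm{\tilde{S}}},\,\mu_{|\nabla|\bm{\tilde{S}}}],
\]
so the prefactor is now the \emph{real} constant $+\tfrac12$ rather than $-\tfrac{\ii}{2}$. The second is that $\vec S \cdot_\eta \vec S = -1$ on $\HH^2$ gives $\mu_{\bm{\tilde{S}}}\mu_{\bm{\tilde{S}}} = -\mathds{1}$ in place of $\mu_{\bm{S}}\mu_{\bm{S}} = \mathds{1}$. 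Crucially, $L_{\bm{\tilde{S}}}$ and $B_{\bm{\tilde{S}}}$ are still defined by formula \eqref{def:Lax}, so $B_{\bm{\tilde{S}}}$ keeps its imaginary prefactors $-\tfrac{\ii}{2}$ and $+\tfrac{\ii}{2}$.

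First I would reproduce the opening of the $\Ss^2$ argument verbatim: differentiate $L_{\bm{\tilde{S}}} = [H,\mu_{\bm{\tilde{S}}}]$, apply Jacobi's identity to $[H,[\mu_{\bm{\tilde{S}}},\mu_{|\nabla|\bm{\tilde{S}}}]]$, and use Cotlar's product identity together with $H|\nabla| = -\pt_x$ to get $[\mu_{|\nabla|\bm{\tilde{S}}},H] = \mu_{\pt_x\bm{\tilde{S}}} + H\mu_{\pt_x\bm{\tilde{S}}}H$ (Cotlar being applied entrywise to the matrix symbol). Since none of these steps uses the value of $\mu_{\bm{\tilde{S}}}^2$, they carry over unchanged and yield the analogue of \eqref{eq:magic0},
\[
\frac{d}{dt}L_{\bm{\tilde{S}}} = -\tfrac12\,[\mu_{\bm{\tilde{S}}},\,\mu_{\pt_x\bm{\tilde{S}}} + H\mu_{\pt_x\bm{\tilde{S}}}H] \;-\; \tfrac12\,[\mu_{|\nabla|\bm{\tilde{S}}},\,L_{\bm{\tilde{S}}}].
\]

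The heart of the proof is the ``magic'' step, and here I would check that \eqref{eq:magic1}, \eqref{eq:magic2} and \eqref{eq:magic3} hold \emph{verbatim} with $\bm{S}$ replaced by $\bm{\tilde{S}}$. The reason is that their derivation uses $\mu_{\bm{S}}^2$ only through the fact that it is a constant scalar multiple of the identity (hence commutes with $H$ and $\pt_x$) and through its vanishing $x$-derivative (hence $\mu_{\pt_x\bm{\tilde{S}}}\mu_{\bm{\tilde{S}}} = -\mu_{\bm{\tilde{S}}}\mu_{\pt_x\bm{\tilde{S}}}$); the particular value $\pm1$ of that scalar never enters. Concretely, in the computations of $[\mu_{\bm{\tilde{S}}}|\nabla|,H\mu_{\bm{\tilde{S}}}]$ and $[|\nabla|\mu_{\bm{\tilde{S}}},\mu_{\bm{\tilde{S}}}H]$ the sign change arising from a term of the form $-H\mu_{\bm{\tilde{S}}}^2H\pt_x$ is cancelled exactly by the sign change in rewriting $\mp\pt_x = \mu_{\bm{\tilde{S}}}^2\pt_x$ at the end of the calculation. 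Adding the resulting analogues of \eqref{eq:magic2} and \eqref{eq:magic3} then gives $[\mu_{\bm{\tilde{S}}}|\nabla| + |\nabla|\mu_{\bm{\tilde{S}}},\,L_{\bm{\tilde{S}}}] = -[\mu_{\bm{\tilde{S}}},\,\mu_{\pt_x\bm{\tilde{S}}} + H\mu_{\pt_x\bm{\tilde{S}}}H]$, exactly as in the proof of Theorem \ref{thm:lax}.

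Finally I would substitute this identity into the displayed formula for $\tfrac{d}{dt}L_{\bm{\tilde{S}}}$, obtaining
\[
\frac{d}{dt}L_{\bm{\tilde{S}}} = \Bigl[\,\tfrac12\bigl(\mu_{\bm{\tilde{S}}}|\nabla| + |\nabla|\mu_{\bm{\tilde{S}}}\bigr) - \tfrac12\mu_{|\nabla|\bm{\tilde{S}}},\; L_{\bm{\tilde{S}}}\Bigr] = \ii\,[\,B_{\bm{\tilde{S}}},\,L_{\bm{\tilde{S}}}\,],
\]
since by \eqref{def:Lax} the operator in the left slot is precisely $\ii B_{\bm{\tilde{S}}}$. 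This is the asserted Lax equation; the extra factor $\ii$ is exactly the ratio between the real coefficient of the $\HH^2$-flow and the imaginary normalization $-\tfrac{\ii}{2}$ built into $B$. I do not expect a genuine obstacle, as this is a variant of Theorem \ref{thm:lax}; the only point demanding care is the sign-cancellation in the magic step, i.e.\ confirming that the algebra is insensitive to whether $\mu^2 = \mathds{1}$ or $\mu^2 = -\mathds{1}$, together with the observation that the original argument never used $\mu_{\bm{S}}^* = \mu_{\bm{S}}$ (the commutators $[|\nabla|\mu_{\bm{S}},L_{\bm{S}}]$ were computed directly rather than via adjoints). It is worth noting, though outside the statement, that $\mu_{\bm{\tilde{S}}}$ is no longer Hermitian-valued — its adjoint corresponds to $\eta\vec S$ — so $L_{\bm{\tilde{S}}}$ is not self-adjoint and $B_{\bm{\tilde{S}}}$ not skew-adjoint, and any spectral conservation laws have to be phrased through $\Tr(L_{\bm{\tilde{S}}}^p)$ rather than $\Tr(|L_{\bm{\tilde{S}}}|^p)$.
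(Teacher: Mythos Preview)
Your proposal is correct and follows exactly the approach the paper intends: its proof is the single line ``The proof of Theorem \ref{thm:lax} carries over mutatis mutandis.'' You have simply unpacked what \emph{mutatis mutandis} means here --- tracking that the real prefactor $+\tfrac12$ replaces $-\tfrac{\ii}{2}$ and that the sign of $\mu_{\bm{\tilde{S}}}^2=\pm\mathds{1}$ cancels in the derivation of \eqref{eq:magic2}--\eqref{eq:magic3} --- and correctly identified that the adjoint route is unavailable while the direct computation still works.
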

 Note the factor of $\ii$ on the right-hand side. In general, the Lax operator $L_{\bm{\tilde{S}}}$ is neither symmetric nor skew-symmetric anymore due to the fact that $\bm{\tilde{S}}$ is neither Hermitian nor anti-Hermitian in general. 
 
 \begin{proof}
 The proof of Theorem \ref{thm:lax} carries over mutatis mutandis. 
 \end{proof}
 
 \section{Summary and Conclusion}
 We have proved that the half-wave maps equation
 $$
\pt_t \vec{S} = \vec{S} \wedge |\nabla| \vec{S},
$$
where $\vec{S}=\vec{S}(t,x)$ is valued into the two--dimensional sphere $\Ss^2$ or the two--dimensional hyperbolic space $\HH^2$, and $x\in \R $ or $\T$, enjoys a Lax pair. The Lax operator is connected to Hankel operators with special matrix symbols. This allowed us to find new conservation laws equivalent to the homogeneous Besov norms $\| \cdot \|_{\dot B^{1/p}_{p,p}}$. As another consequence of the Lax equation, we establish that the subclass rational functions of $x\in \R $ on the line or of ${\rm e}^{\ii x}$ on the circle is conserved by the dynamics. More generally, it is expected that the inverse spectral theory of Hankel operators developed in \cite{GGAst}, \cite{Po} for studying dynamics of the cubic Szeg\H{o} equation will be of valuable help in the study of dynamics of this energy-critical half-wave maps equation. We hope to come back to these questions in a forthcoming paper.

\begin{appendix}
\end{appendix}

\end{document}